
\documentclass[a4paper]{article}
\usepackage{amsmath,amsfonts,amssymb,amsthm,pstricks,caption}


\newtheorem{lemma}{Lemma}
\newtheorem{theorem}[lemma]{Theorem}
\newtheorem{proposition}[lemma]{Proposition}
\newtheorem{corollary}[lemma]{Corollary}

\theoremstyle{definition}
\newtheorem{definition}[lemma]{Definition}


\begin{document}

\title{Knots with many minimal genus Seifert surfaces}
\author{Jessica E. Banks}
\date{}
\maketitle


In \cite{2013arXiv1308.2899R}, Roberts proves the following result using sutured Floer homology and the Seifert form.

\begin{theorem}[\cite{2013arXiv1308.2899R} Theorem 1]
For $n\in\mathbb{N}$, there is an oriented prime knot $K_n$ in $\mathbb{S}^3$ of genus $n$ that has at least $2^{2n-1}$ distinct minimal genus Seifert surfaces, up to ambient isotopy in $\mathbb{S}^3$.
\end{theorem}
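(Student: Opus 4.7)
The plan is to construct $K_n$ as a genus-$n$ knot whose minimal-genus Seifert surface decomposes into many small, independently-modifiable pieces, and then to distinguish the resulting wealth of surfaces using the two ambient-isotopy invariants flagged by Roberts: sutured Floer homology of the complementary sutured manifold and the Seifert form. Concretely, I would seek a pattern whose Seifert surface $F_n$ is built, via iterated Murasugi sum (or sutured-manifold hierarchy), from roughly $2n-1$ elementary pieces of total genus $n$, each piece admitting a binary choice of embedding that leaves the boundary knot $\partial F_n = K_n$ fixed. Arranging the decomposition so that the $2n-1$ choices are genuinely independent then produces $2^{2n-1}$ candidate minimal-genus Seifert surfaces for the same $K_n$, all of the same minimal genus $n$ by construction.

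To show these $2^{2n-1}$ surfaces are pairwise non-isotopic in $\mathbb{S}^3$, I would exploit the fact that the complementary sutured manifold $(\mathbb{S}^3\setminus\nu(F),\gamma)$ is an ambient-isotopy invariant of $F$: computing its sutured Floer homology from a Heegaard diagram adapted to the decomposition should yield a distinct invariant for each binary configuration, since the decomposition furnishes a natural collection of product-disks/product-annuli whose effect on $SFH$ can be tracked. The Seifert form provides complementary data, computable piece-by-piece along the decomposition, and should separate configurations that $SFH$ alone fails to distinguish. Primeness of $K_n$ would follow either from the irreducibility of its Alexander polynomial, readable from the Seifert form, or from a direct $JSJ$-style argument using incompressibility of the minimal-genus surfaces in the exterior.

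The main obstacle I anticipate is the last combinatorial step: ruling out accidental ambient isotopies between different binary configurations. Symmetries of $K_n$ could permute the elementary pieces and conflate combinatorially distinct configurations, and even in the absence of symmetry some binary choices might unexpectedly yield isotopic surfaces. Breaking all such symmetries by a careful choice of seed knot and plumbing locations, and then verifying via $SFH$ together with the Seifert form that every pair of the $2^{2n-1}$ configurations is genuinely distinguished, is where the technical weight of the proof will lie; the enumeration of surfaces is essentially formal, but the non-isotopy statement is the substance.
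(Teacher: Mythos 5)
Your plan is essentially a reconstruction of Roberts' original argument (a plumbing-type construction distinguished by sutured Floer homology and the Seifert form), which is precisely the route this paper is written to avoid. The paper instead exhibits explicit knots $L_n$ that are simultaneously rational, special arborescent and special alternating: minimality of the genus of the Seifert's-algorithm surface $R_n$ and primeness both come from cited diagrammatic results for (special) alternating links, and --- this is the key point --- the count of minimal genus Seifert surfaces is not obtained by distinguishing candidate surfaces pair by pair, but by invoking a prior \emph{classification} theorem (Theorem 1.3 of the author's earlier paper on special alternating links, or equivalently Hatcher--Thurston for rational knots). That theorem gives a bijection between isotopy classes of minimal genus Seifert surfaces and choices of one edge from each of the $2n-1$ two-edge components of an auxiliary planar graph $\mathcal{G}_n$, yielding \emph{exactly} $2^{2n-1}$ surfaces, and up to the finer equivalence of isotopy in the knot complement at that. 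The only thing left to check by hand is that $L_n$ is a knot rather than a link, which is a two-line induction. So the paper's approach buys an exact count and a stronger equivalence relation, at the price of restricting to a special family; Roberts' (and your) approach is more flexible but only gives a lower bound.

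Measured as a proof, your proposal has a genuine gap exactly where you locate the ``technical weight'': you never actually distinguish the $2^{2n-1}$ configurations. Producing that many candidate surfaces from independent binary choices is, as you say, formal; the entire content of the theorem is the pairwise non-isotopy, and ``computing $SFH$ from a Heegaard diagram adapted to the decomposition should yield a distinct invariant for each configuration'' is a hope rather than an argument --- one must either carry out that computation (as Roberts does) or replace it by a classification theorem (as this paper does). Two further points. First, the complementary sutured manifold and its invariants most naturally see surfaces up to isotopy in the link complement, and the paper explicitly flags that isotopy in the complement and ambient isotopy in $\mathbb{S}^3$ are a priori different equivalences, so your argument needs a sentence explaining why the invariants you compute settle the $\mathbb{S}^3$ statement. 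Second, primeness does not follow from irreducibility of the Alexander polynomial: a nontrivial connected summand can have trivial Alexander polynomial and is then invisible to $\Delta$. The paper gets primeness directly from the knot having a suitable prime alternating diagram.
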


We will only consider oriented links.
Roberts notes that the knots $K_n$ are all arborescent and alternating.
Minimal genus Seifert surfaces have previously been classified up to ambient isotopy in the link complement\footnote{Thank you to Lawrence Roberts for pointing out this distinction in definition.} for special arborescent links by Sakuma (\cite{MR1315011}), and for special alternating links by the author (\cite{2011arXiv1106.3180B}, see also \cite{MR1664976}). 
A link is \textit{special arborescent} if it is the boundary of a surface given by plumbing annuli in a particular fashion defined by a labelled tree, where each annulus is embedded in $\mathbb{S}^3$ such that the core curve is an unknot and the annulus is incompressible.
On the other hand, a link is \textit{special alternating} if it has an alternating diagram $D$ such that every Seifert circle of $D$ bounds a disc in $\mathbb{S}^2$ disjoint from the other Seifert circles.

Here we give a family of prime, special arborescent, special alternating knots $L_n$ that are known by geometric methods (for example by \cite{2011arXiv1106.3180B}) to have exactly $2^{2n-1}$ distinct minimal genus Seifert surfaces up to ambient isotopy in the knot complement. 
This count also follows from the classification of incompressible surfaces in rational knot complements by Hatcher and Thurston (\cite{MR778125}) as the knots given are also rational knots.
 
The knots $L_n$ are a subset of those considered by Roberts. It therefore follows that each knot $L_n$ has exactly $2^{2n-1}$ minimal genus Seifert surfaces up to isotopy in $\mathbb{S}^3$.

\begin{definition}
For $n\in\mathbb{N}$, build as follows a link $L_n$, a Seifert surface $R_n$ for $L_n$, and a planar graph $\mathcal{G}_n$.
Begin with Figure \ref{pic1}(a). Fill the boundary circle on the right with Figure \ref{pic1}(b).
\begin{figure}[htb]
\centering
(a)
\psset{xunit=.35pt,yunit=.35pt,runit=.35pt}
\begin{pspicture}(360,360)
{
\newgray{lightgrey}{0.9}
}
{
\pscustom[linestyle=none,fillstyle=solid,fillcolor=lightgray]
{
\newpath
\moveto(60,180)
\lineto(90,150)
\lineto(90,90)
\lineto(150,90)
\lineto(180,120)
\lineto(210,90)
\lineto(270,90)
\lineto(300,120)
\lineto(330,90)
\lineto(330,30)
\lineto(30,30)
\lineto(30,150)
\closepath
\moveto(180,240)
\lineto(150,210)
\lineto(180,180)
\lineto(210,210)
\closepath
\moveto(300,240)
\lineto(270,210)
\lineto(300,180)
\lineto(330,210)
\closepath
}
}
{
\pscustom[linestyle=none,fillstyle=solid,fillcolor=lightgrey]
{
\newpath
\moveto(60,180)
\lineto(90,210)
\lineto(90,270)
\lineto(150,270)
\lineto(180,240)
\lineto(210,270)
\lineto(270,270)
\lineto(300,240)
\lineto(330,270)
\lineto(330,330)
\lineto(30,330)
\lineto(30,210)
\closepath
\moveto(300,180)
\lineto(270,150)
\lineto(300,120)
\lineto(330,150)
\closepath
\moveto(180,180)
\lineto(150,150)
\lineto(180,120)
\lineto(210,150)
\closepath
}
}
{
\pscustom[linewidth=1.4,linecolor=black]
{
\newpath
\moveto(70,190)
\lineto(90,210)
\lineto(90,270)
\lineto(150,270)
\lineto(210,210)
\lineto(190,190)
\moveto(170,169.99998)
\lineto(150,149.99998)
\lineto(210,89.99998)
\lineto(270,89.99998)
\lineto(330,149.99998)
\lineto(270,209.99999)
\lineto(290,229.99999)
\moveto(310,249.99999)
\lineto(330,269.99999)
\lineto(330,329.99999)
\lineto(30,329.99999)
\lineto(30,269.99999)
\lineto(30,209.99999)
\lineto(90,149.99998)
\lineto(90,89.99998)
\lineto(150,89.99998)
\lineto(210,149.99998)
\lineto(150,209.99999)
\lineto(170,229.99999)
\moveto(190,249.99999)
\lineto(210,269.99999)
\lineto(270,269.99999)
\lineto(330,209.99999)
\lineto(310,189.99999)
\moveto(290,169.99998)
\lineto(270,149.99998)
\lineto(330,89.99998)
\lineto(330,29.99998)
\lineto(30,29.99998)
\lineto(30,149.99998)
\lineto(50,169.99998)
}
}
{
\pscustom[linewidth=1.4,linecolor=black,linestyle=dashed,dash=5.6 5.6,fillstyle=solid,fillcolor=white]
{
\newpath
\moveto(340,119.99999262)
\curveto(340,142.09138261)(322.09138999,159.99999262)(300,159.99999262)
\curveto(277.90861001,159.99999262)(260,142.09138261)(260,119.99999262)
\curveto(260,97.90860262)(277.90861001,79.99999262)(300,79.99999262)
\curveto(322.09138999,79.99999262)(340,97.90860262)(340,119.99999262)
\closepath
\moveto(220,119.99999262)
\curveto(220,142.09138261)(202.09138999,159.99999262)(180,159.99999262)
\curveto(157.90861001,159.99999262)(140,142.09138261)(140,119.99999262)
\curveto(140,97.90860262)(157.90861001,79.99999262)(180,79.99999262)
\curveto(202.09138999,79.99999262)(220,97.90860262)(220,119.99999262)
\closepath
}
}
{
\pscustom[linestyle=none,fillstyle=solid,fillcolor=gray]
{
\newpath
\moveto(190,299.99999738)
\curveto(190,294.47714988)(185.5228475,289.99999738)(180,289.99999738)
\curveto(174.4771525,289.99999738)(170,294.47714988)(170,299.99999738)
\curveto(170,305.52284488)(174.4771525,309.99999738)(180,309.99999738)
\curveto(185.5228475,309.99999738)(190,305.52284488)(190,299.99999738)
\closepath
\moveto(190,59.99999738)
\curveto(190,54.47714988)(185.5228475,49.99999738)(180,49.99999738)
\curveto(174.4771525,49.99999738)(170,54.47714988)(170,59.99999738)
\curveto(170,65.52284488)(174.4771525,69.99999738)(180,69.99999738)
\curveto(185.5228475,69.99999738)(190,65.52284488)(190,59.99999738)
\closepath
}
}
{
\pscustom[linewidth=2.8,linecolor=gray]
{
\newpath
\moveto(180,300)
\lineto(120,280)
\lineto(120,80)
\lineto(180,60)
\lineto(240,80)
\lineto(240,280)
\closepath
}
}
\end{pspicture}
(b)
\psset{xunit=.35pt,yunit=.35pt,runit=.35pt}
\begin{pspicture}(360,360)
{
\newgray{lightgrey}{0.9}
}
{
\pscustom[linestyle=none,fillstyle=solid,fillcolor=lightgray]
{
\newpath
\moveto(180,250)
\lineto(150,220)
\lineto(180,190)
\lineto(210,220)
\closepath
\moveto(160,10.00001)
\lineto(60,60.00001)
\lineto(150,100)
\lineto(180,130)
\lineto(210,100)
\lineto(300,60.00001)
\lineto(195,10.00001)
\closepath
}
}
{
\pscustom[linestyle=none,fillstyle=solid,fillcolor=lightgrey]
{
\newpath
\moveto(150,350)
\lineto(60,300)
\lineto(150,280)
\lineto(180,250)
\lineto(210,280)
\lineto(300,300)
\lineto(205,350)
\closepath
\moveto(180,190)
\lineto(150,160)
\lineto(180,130)
\lineto(210,160)
\closepath
}
}
{
\pscustom[linewidth=1.4,linecolor=black]
{
\newpath
\moveto(190,140)
\lineto(210,160)
\lineto(150,220)
\lineto(170,240)
\moveto(190,260)
\lineto(210,280)
\lineto(300,300)
\moveto(60,60)
\lineto(150,100)
\lineto(170,120)
\moveto(300,60)
\lineto(210,100)
\lineto(150,160)
\lineto(170,180)
\moveto(190,200)
\lineto(210,220)
\lineto(150,280)
\lineto(60,300)
}
}
{
\pscustom[linewidth=11,linecolor=white]
{
\newpath
\moveto(346.49664,180.00001328)
\curveto(346.49664,88.04645811)(271.95355517,13.50337328)(180,13.50337328)
\curveto(88.04644483,13.50337328)(13.50336,88.04645811)(13.50336,180.00001328)
\curveto(13.50336,271.95356845)(88.04644483,346.49665328)(180,346.49665328)
\curveto(271.95355517,346.49665328)(346.49664,271.95356845)(346.49664,180.00001328)
\closepath
}
}
{
\pscustom[linewidth=1.4,linecolor=black,linestyle=dashed,dash=5.6 5.6]
{
\newpath
\moveto(340,180.00000047)
\curveto(340,268.36556044)(268.36555997,340.00000047)(180,340.00000047)
\curveto(91.63444003,340.00000047)(20,268.36556044)(20,180.00000047)
\curveto(20,91.6344405)(91.63444003,20.00000047)(180,20.00000047)
\curveto(268.36555997,20.00000047)(340,91.6344405)(340,180.00000047)
\closepath
}
}
{
\pscustom[linewidth=1.4,linecolor=black,linestyle=dashed,dash=5.6 5.6,fillstyle=solid,fillcolor=white]
{
\newpath
\moveto(220,130.00000262)
\curveto(220,152.09139261)(202.09138999,170.00000262)(180,170.00000262)
\curveto(157.90861001,170.00000262)(140,152.09139261)(140,130.00000262)
\curveto(140,107.90861262)(157.90861001,90.00000262)(180,90.00000262)
\curveto(202.09138999,90.00000262)(220,107.90861262)(220,130.00000262)
\closepath
}
}
\end{pspicture}
\caption{\label{pic1}}
\end{figure}
If $n\neq 1$, repeat the following three steps $(n-1)$ times.
First fill each of the two boundary circles with Figure \ref{pic2}(a). Second, on the $m$th step, fill the circle on the left with Figure \ref{pic1}(b) $(2m)$ times.
Thirdly, on the $m$th step, fill the circle on the right with Figure \ref{pic1}(b) $(2m+1)$ times.
Finally, fill the remaining two circles with Figure \ref{pic2}(b).
\begin{figure}[htb]
\centering
(a)
\psset{xunit=.35pt,yunit=.35pt,runit=.35pt}
\begin{pspicture}(360,360)
{
\newgray{lightgrey}{0.9}
}
{
\pscustom[linestyle=none,fillstyle=solid,fillcolor=lightgray]
{
\newpath
\moveto(250,250)
\lineto(220,220)
\lineto(250,190)
\lineto(280,220)
\closepath
\moveto(160,10.00001)
\lineto(60,60.00001)
\lineto(100,160.00001)
\lineto(130,190.00001)
\lineto(160,160.00001)
\lineto(160,100.00001)
\lineto(220,100.00001)
\lineto(250,130.00001)
\lineto(280,100.00001)
\lineto(300,60.00001)
\lineto(195,10.00001)
\closepath
}
}
{
\pscustom[linestyle=none,fillstyle=solid,fillcolor=lightgrey]
{
\newpath
\moveto(150,350)
\lineto(60,300)
\lineto(100,220)
\lineto(130,190)
\lineto(160,220)
\lineto(160,280)
\lineto(220,280)
\lineto(250,250)
\lineto(300,300)
\lineto(205,350)
\closepath
\moveto(250,190.00001)
\lineto(220,160.00001)
\lineto(250,130.00001)
\lineto(280,160.00001)
\closepath
}
}
{
\pscustom[linewidth=1.4,linecolor=black]
{
\newpath
\moveto(60,300)
\lineto(100,220)
\lineto(160,160)
\lineto(160,100)
\lineto(220,100)
\lineto(240,120)
\moveto(260,140)
\lineto(280,160)
\lineto(220,220)
\lineto(240,240)
\moveto(260,260)
\lineto(300,300)
\moveto(120,180)
\lineto(100,160)
\lineto(60,60)
\moveto(300,60)
\lineto(280,100)
\lineto(220,160)
\lineto(240,180)
\moveto(260,200)
\lineto(280,220)
\lineto(220,280)
\lineto(160,280)
\lineto(160,220)
\lineto(140,200)
}
}
{
\pscustom[linewidth=1.4,linecolor=black,linestyle=dashed,dash=5.6 5.6,fillstyle=solid,fillcolor=white]
{
\newpath
\moveto(290,129.99999262)
\curveto(290,152.09138261)(272.09138999,169.99999262)(250,169.99999262)
\curveto(227.90861001,169.99999262)(210,152.09138261)(210,129.99999262)
\curveto(210,107.90860262)(227.90861001,89.99999262)(250,89.99999262)
\curveto(272.09138999,89.99999262)(290,107.90860262)(290,129.99999262)
\closepath
}
}
{
\pscustom[linewidth=11,linecolor=white]
{
\newpath
\moveto(346.49664,180.00001328)
\curveto(346.49664,88.04645811)(271.95355517,13.50337328)(180,13.50337328)
\curveto(88.04644483,13.50337328)(13.50336,88.04645811)(13.50336,180.00001328)
\curveto(13.50336,271.95356845)(88.04644483,346.49665328)(180,346.49665328)
\curveto(271.95355517,346.49665328)(346.49664,271.95356845)(346.49664,180.00001328)
\closepath
}
}
{
\pscustom[linewidth=1.4,linecolor=black,linestyle=dashed,dash=5.6 5.6]
{
\newpath
\moveto(340,180.00000047)
\curveto(340,268.36556044)(268.36555997,340.00000047)(180,340.00000047)
\curveto(91.63444003,340.00000047)(20,268.36556044)(20,180.00000047)
\curveto(20,91.6344405)(91.63444003,20.00000047)(180,20.00000047)
\curveto(268.36555997,20.00000047)(340,91.6344405)(340,180.00000047)
\closepath
}
}
{
\pscustom[linewidth=1,linecolor=gray,fillstyle=solid,fillcolor=gray]
{
\newpath
\moveto(260,299.99999477)
\curveto(260,294.47714727)(255.5228475,289.99999477)(250,289.99999477)
\curveto(244.4771525,289.99999477)(240,294.47714727)(240,299.99999477)
\curveto(240,305.52284226)(244.4771525,309.99999477)(250,309.99999477)
\curveto(255.5228475,309.99999477)(260,305.52284226)(260,299.99999477)
\closepath
\moveto(260,59.99999477)
\curveto(260,54.47714727)(255.5228475,49.99999477)(250,49.99999477)
\curveto(244.4771525,49.99999477)(240,54.47714727)(240,59.99999477)
\curveto(240,65.52284226)(244.4771525,69.99999477)(250,69.99999477)
\curveto(255.5228475,69.99999477)(260,65.52284226)(260,59.99999477)
\closepath
}
}
{
\pscustom[linewidth=2.8,linecolor=gray]
{
\newpath
\moveto(250,299.99999738)
\lineto(190,289.99999738)
\lineto(190,69.99999738)
\lineto(250,59.99999738)
\lineto(310,109.99999738)
\lineto(310,249.99999738)
\closepath
}
}
\end{pspicture}
(b)
\psset{xunit=.35pt,yunit=.35pt,runit=.35pt}
\begin{pspicture}(360,360)
{
\newgray{lightgrey}{0.9}
}
{
\pscustom[linestyle=none,fillstyle=solid,fillcolor=lightgray]
{
\newpath
\moveto(60,60.000003)
\lineto(180,180.000003)
\lineto(300,60.000003)
\lineto(230,20.000003)
\lineto(130,20.000003)
\closepath
}
}
{
\pscustom[linestyle=none,fillstyle=solid,fillcolor=lightgrey]
{
\newpath
\moveto(60,300)
\lineto(180,180.000003)
\lineto(300,300)
\lineto(205,350)
\lineto(155,350)
\closepath
}
}
{
\pscustom[linewidth=1.4,linecolor=black]
{
\newpath
\moveto(60,300.000003)
\lineto(300,60.000003)
\moveto(60,60.000003)
\lineto(140,140.000003)
\moveto(220,220.000003)
\lineto(300,300.000003)
}
}
{
\pscustom[linewidth=11,linecolor=white]
{
\newpath
\moveto(346.49664,180.00001628)
\curveto(346.49664,88.04646111)(271.95355517,13.50337628)(180,13.50337628)
\curveto(88.04644483,13.50337628)(13.50336,88.04646111)(13.50336,180.00001628)
\curveto(13.50336,271.95357145)(88.04644483,346.49665628)(180,346.49665628)
\curveto(271.95355517,346.49665628)(346.49664,271.95357145)(346.49664,180.00001628)
\closepath
}
}
{
\pscustom[linewidth=1.4,linecolor=black,linestyle=dashed,dash=5.6 5.6]
{
\newpath
\moveto(340,180.00000347)
\curveto(340,268.36556344)(268.36555997,340.00000347)(180,340.00000347)
\curveto(91.63444003,340.00000347)(20,268.36556344)(20,180.00000347)
\curveto(20,91.6344435)(91.63444003,20.00000347)(180,20.00000347)
\curveto(268.36555997,20.00000347)(340,91.6344435)(340,180.00000347)
\closepath
}
}
\end{pspicture}
\caption{\label{pic2}}
\end{figure}
\end{definition}

For $n\in\mathbb{N}$, the link $L_n$ is special alternating, and $R_n$ is minimal genus (by \cite{MR860665} Theorem 4) with Euler characteristic $1-2n$.
By \cite{MR721450} Theorem 1, each $L_n$ is prime. 
Note that the graph $\mathcal{G}_n$ has $2n-1$ connected components, each of which consists of two vertices joined by two edges.
For the family of links $L_n$, Theorem 1.3 of \cite{2011arXiv1106.3180B} gives the following.

\begin{proposition}
Let $A_n$ be the set of ways of choosing one of the two edges of each connected component of $\mathcal{G}_n$.
Then there is a bijection between $A_n$ and the set of ambient isotopy classes of minimal genus Seifert surfaces for $L_n$.
\end{proposition}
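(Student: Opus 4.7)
The plan is to derive the proposition as a direct application of Theorem 1.3 of \cite{2011arXiv1106.3180B}, which, given a special alternating link together with its standard alternating diagram, provides a classification of ambient isotopy classes of minimal genus Seifert surfaces in the link complement in terms of a combinatorial gadget extracted from the diagram. The role of the graph $\mathcal{G}_n$ built up alongside $L_n$ and $R_n$ in the definition is precisely to record, component by component, the independent local isotopy freedoms predicted by that theorem; once this identification is made, the bijection with $A_n$ is a formal consequence.

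First I would recall the precise output of Theorem 1.3 of \cite{2011arXiv1106.3180B}: to a special alternating diagram $D$ of $L$ it associates a planar graph $\mathcal{G}$ whose vertices and edges come from a prescribed analysis of the Seifert state of $D$, and minimal genus Seifert surfaces (up to ambient isotopy in $\mathbb{S}^3 \setminus L$) are classified by a prescribed rule for selecting edges in each connected component of $\mathcal{G}$. I would then check, by induction on $n$ and by inspecting the local pictures in Figures \ref{pic1} and \ref{pic2}, that the graph produced from the standard special alternating diagram of $L_n$ by this general construction coincides with the planar graph $\mathcal{G}_n$ assembled piece by piece in the definition. The key observation is that Figures \ref{pic1}(a), \ref{pic2}(a) and \ref{pic2}(b) contribute only rigid parts of the graph (parts that glue pieces together but do not add new components), while each insertion of the twist region in Figure \ref{pic1}(b) gives rise to a new bigon component; tallying these in the recursive construction yields exactly $2n-1$ bigon components, matching the statement.

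Next, I would analyse what the general edge-selection rule of Theorem 1.3 of \cite{2011arXiv1106.3180B} degenerates to on a connected component which is a single bigon (two vertices joined by two edges). Here the admissibility condition becomes trivial: any choice of one of the two edges is admissible and no further compatibility between components is imposed. Hence the set of admissible selections is canonically identified with $A_n = \prod \{\text{two edges of the bigon}\}$, and the bijection in Theorem 1.3 of \cite{2011arXiv1106.3180B} restricts to the desired bijection between $A_n$ and ambient isotopy classes of minimal genus Seifert surfaces for $L_n$.

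The main obstacle is the middle step: verifying that the inductively defined $\mathcal{G}_n$ really is the graph produced by the algorithm of \cite{2011arXiv1106.3180B} when applied to the standard diagram of $L_n$. This is essentially a bookkeeping exercise, tracking the Seifert circles, crossings, and the associated graph for each of the three template tangles and showing that substitution into an existing diagram adds to the graph precisely the expected local pieces. Once this matching of graphs is established, the remainder of the argument is formal, and in particular the fact that we are working with ambient isotopy in $\mathbb{S}^3\setminus L_n$ rather than in $\mathbb{S}^3$ requires no further attention at this stage, since it is built into Theorem 1.3 of \cite{2011arXiv1106.3180B}.
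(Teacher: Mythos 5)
Your overall route is exactly the paper's: the proposition is stated as a direct application of Theorem 1.3 of \cite{2011arXiv1106.3180B} to the special alternating diagrams of the $L_n$, with no further argument given, so your plan of verifying the hypotheses and matching the combinatorial data is the right (and only) work to be done. One concrete slip in your bookkeeping, though: you attribute the bigon components of $\mathcal{G}_n$ to the insertions of Figure \ref{pic1}(b), but it is Figures \ref{pic1}(a) and \ref{pic2}(a) that carry the grey vertices and edges and hence each contribute one two-vertex, two-edge component (one from Figure \ref{pic1}(a) plus two per step from the copies of Figure \ref{pic2}(a), giving $1+2(n-1)=2n-1$), while Figures \ref{pic1}(b) and \ref{pic2}(b) contribute nothing to the graph; counting Figure \ref{pic1}(b) insertions as you propose would give $n(2n-1)$ components, not $2n-1$. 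This does not affect the bijection asserted in the proposition itself, but it would corrupt the count $2^{2n-1}$ needed for the corollary, so it is worth fixing.
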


It remains only to establish the following.

\begin{lemma}
Each link $L_n$ is a knot.
\end{lemma}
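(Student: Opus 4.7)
The plan is to count the components of $L_n$ layer by layer through the nested construction. Each of Figures 1(b), 2(a) and 2(b) is a $4$-ended tangle gadget, with its strand endpoints meeting the outer (and, when present, inner) dashed circle, while Figure 1(a) is a planar diagram with two disc-shaped tangle holes. Hence the number of components of $L_n$ is fully determined by the matchings these gadgets induce on their endpoints, together with the fixed matching produced by the outer closing strands of Figure 1(a).

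First I would inspect each gadget and read off its matching: for Figure 2(b), which pairing of the four outer endpoints its two strands realise; for Figures 1(b) and 2(a), how the four outer endpoints are identified with the four inner endpoints. Each of these is a rational tangle, so the matching is one of the crossingless pairings of four points on a circle, and I would check in passing that no gadget contains a loose closed loop of its own.

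Second, I would induct on the nesting depth of the two chains plugged into the holes of Figure 1(a). The inductive hypothesis is that after inserting the $k$ innermost layers of a chain (starting from a terminating Figure 2(b) and wrapping outwards through further 1(b)'s and 2(a)'s as prescribed), the resulting composite tangle has a specific matching on its four outer endpoints and contains no free circle components. Standard rational-tangle arithmetic then governs how this matching transforms when one more layer is wrapped on, and the asymmetric parities $2m$ on the left versus $2m+1$ on the right at the $m$th iteration should be exactly what is needed to bring the two chains to the matching types whose closure in Figure 1(a) yields a single loop.

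The final step is to plug the two chain-matchings into the fixed closing pattern of Figure 1(a) and verify directly that a single closed component results. The main obstacle I expect is the bookkeeping: correctly extracting each gadget's matching from the fairly intricate PSTricks diagrams, and then tracking the parity-sensitive rational-tangle arithmetic consistently through the two chains of differing recursion so that the final closure is a knot rather than a two-component link.
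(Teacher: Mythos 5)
Your proposal is correct and is essentially the paper's argument: both amount to tracking how the tangle gadgets match up strand endpoints and exploiting the period-two behaviour of that matching as layers are added. The paper simply packages this as a two-step induction --- it observes that if $L_n$ is a knot then so is $L_{n+2}$, and checks the base cases $L_1$ and $L_2$ directly --- which is exactly the periodicity your layer-by-layer matching arithmetic would establish.
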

\begin{proof}
If $L_n$ is a knot for some $n\in\mathbb{N}$, then so is $L_{n+2}$. In addition, $L_1$ and $L_2$ are both knots.
\end{proof}

\begin{corollary}
For $n\in\mathbb{N}$, there is an oriented prime knot $L_n$ in $\mathbb{S}^3$ of genus $n$ that has $2^{2n-1}$ distinct minimal genus Seifert surfaces, up to ambient isotopy in $\mathbb{S}^3$.
\end{corollary}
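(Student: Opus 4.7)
The plan is to assemble the corollary from the several pieces already collected in the excerpt, with the only slightly delicate point being the passage from isotopy in the complement to isotopy in $\mathbb{S}^3$. First I would combine the lemma with the facts recorded just after Figure~\ref{pic2}: the lemma guarantees that $L_n$ is a single component, so it is a knot; by \cite{MR721450} Theorem~1 it is prime; and by \cite{MR860665} Theorem~4 the surface $R_n$ is minimal genus with $\chi(R_n)=1-2n$. Since $L_n$ is a knot, $R_n$ has exactly one boundary component, and hence genus exactly $n$. This accounts for the prime oriented knot of the advertised genus.

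Next I would count minimal genus Seifert surfaces. The graph $\mathcal{G}_n$ has $2n-1$ components, each a pair of vertices connected by two edges, so the set $A_n$ of choices of one edge per component has cardinality $2^{2n-1}$. The proposition then identifies $A_n$ with the set of ambient isotopy classes of minimal genus Seifert surfaces for $L_n$ in the \emph{knot complement}. Because two surfaces isotopic in the complement of $L_n$ are \textit{a fortiori} isotopic in $\mathbb{S}^3$, the number of isotopy classes in $\mathbb{S}^3$ is at most $|A_n|=2^{2n-1}$.

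For the matching lower bound I would appeal to Roberts' Theorem~1. The introductory discussion records that the $L_n$ form a subfamily of Roberts' knots $K_n$; since $L_n$ has genus $n$, it sits at the corresponding level of that family, and so Roberts' theorem furnishes at least $2^{2n-1}$ distinct minimal genus Seifert surfaces up to ambient isotopy in $\mathbb{S}^3$. Combined with the upper bound of the previous paragraph, this pins the count at exactly $2^{2n-1}$.

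The only step where I expect any real care is the identification of the $L_n$ with specific knots in Roberts' family, since the lower bound on the $\mathbb{S}^3$-isotopy count is imported from \cite{2013arXiv1308.2899R} rather than established intrinsically here; once that identification is in hand, the inequality in both directions is immediate and the remaining assembly of primality, genus, and surface count is purely formal.
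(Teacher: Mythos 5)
Your proposal is correct and follows essentially the same route as the paper: the genus and primality come from the cited results, the upper bound of $2^{2n-1}$ comes from the Proposition together with the fact that isotopy in the complement refines isotopy in $\mathbb{S}^3$, and the matching lower bound is imported from Roberts' theorem via the observation that the $L_n$ lie in his family. You correctly flag the identification with Roberts' knots as the one point taken on faith; the paper likewise asserts this without further argument.
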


\bibliography{manysurfacesreferences}
\bibliographystyle{hplain}

\bigskip
\noindent
CIRGET, D\'{e}partement de math\'{e}matiques,
UQAM

\noindent
Case postale 8888, Centre-ville

\noindent
Montr\'eal, H3C 3P8,
Quebec, Canada

\noindent
\textit{jessica.banks[at]lmh.oxon.org}

\end{document}